\documentclass[12pt,a4paper]{amsart}
\usepackage{times}
\usepackage{amssymb,amsmath,amsthm}
\usepackage{mathtools}
\usepackage[utf8]{inputenc}
\usepackage[T1]{fontenc}
\usepackage{xcolor}
\usepackage{enumitem}
\usepackage{url}
\usepackage{hyperref}
\usepackage{booktabs}
\usepackage{array}
\usepackage[left=2.5cm,right=2.5cm,top=2.5cm,bottom=2.5cm]{geometry}
\usepackage{microtype}

\definecolor{MyRed}{RGB}{192,0,0}

\newtheorem{thm}{Theorem}[section]
\newtheorem{prop}{Proposition}[section]
\newtheorem{lem}{Lemma}[section]
\newtheorem{cor}{Corollary}[section]

\theoremstyle{definition}
\newtheorem{defin}{Definition}[section]
\newtheorem{ex}{Example}[section]
\newtheorem{prob}{Open Problem}[section]

\theoremstyle{remark}
\newtheorem{rem}{Remark}[section]

\def\Set{\mathrm{Set}}
\def\Bij{\mathcal{B}}
\providecommand{\lcm}{}
\renewcommand{\lcm}{\mathrm{lcm}}
\newcommand{\Cb}{\mathbf{C}}
\newcommand{\Kb}{\mathbf{K}}
\newcommand{\Eb}{\mathbf{E}}
\newcommand{\DblCoset}[2]{\langle #1 \rangle \backslash S_n / \langle #2 \rangle}

\begin{document}

\title[Geometric Realization and a M\"obius Formula for
$\Cb_\alpha \star \Cb_\beta$]{The Molecular Species
$\mathbf{C}_{\alpha}$: Geometric Realization and a Closed Formula for
Kronecker Coefficients}

\author{Josaphat Baolahy}
\address{Laboratoire de Math\'ematiques et Applications,
Facult\'e des Sciences, Universit\'e de Fianarantsoa,
Fianarantsoa 301, Madagascar}
\email{japhtbaolahy@gmail.com}

\author{Benjamin Randrianirina}
\address{Laboratoire de Math\'ematiques et Applications,
Facult\'e des Sciences, Universit\'e de Fianarantsoa,
Fianarantsoa 301, Madagascar}
\email{rabezarand@gmail.com}
\thanks{The author was supported by the International Mathematical
Union (IMU) through the Graduate Research Assistantships in
Developing Countries (GRAID) programme, administered by the
Commission for Developing Countries (CDC).}
\date{\today}

\begin{abstract}

In this paper we first introduce the \emph{infinite multi-row periodic pattern of
shape~$\alpha$} as a geometric realization of $\Cb_\alpha$.
We then give an explicit formula for the
coefficients $b^{\lambda}_{\alpha,\beta}$ appearing in the species
decomposition
\[
  \Cb_\alpha \times \Cb_\beta
  = \sum_{\lambda \vdash n} b^\lambda_{\alpha,\beta}\,\Cb_\lambda.
\]

\end{abstract}

\maketitle
\tableofcontents

\section{Introduction}
\label{sec:intro}

Let $\Lambda^n$ denote the degree-$n$ homogeneous component of the
ring of symmetric functions. The cycle-index series $\{Z_{\Cb_
\lambda}\}_{\lambda \vdash n}$ of the molecular species $\Cb_\lambda
= X^n/\langle\sigma_\lambda\rangle$ studied in \cite{BR2026} form a
basis of $\Lambda^n$. Crucially, $\{\Cb_\lambda\}$
is closed under the Hadamard (Kronecker) product $\times$: by a
Mackey-theoretic argument \cite[Lemma~5.2]{BR2026},
\[
  \Cb_\alpha \times \Cb_\beta
  = \sum_{\lambda \vdash n} b^\lambda_{\alpha,\beta}\,\Cb_\lambda,
  \qquad b^\lambda_{\alpha,\beta} \in \mathbb{N},
\]
extending the classical result of Garsia and Remmel~\cite{GR1985},
who established the analogous closure for $\{h_\lambda\}$ and gave
an explicit combinatorial rule for $h_\lambda \star h_\mu$ in terms
of \emph{shuffles of permutations}. The corresponding question for
the Schur basis, finding a positive combinatorial rule for the
Kronecker coefficients $g^\nu_{\lambda\mu}$ in $s_\lambda \star
s_\mu = \sum_\nu g^\nu_{\lambda\mu}\, s_\nu$, has been open at least since
Murnaghan~\cite{Murnaghan1938}, and according to Richard Stanley
(see \cite{Stanley2000}), it remains one of the most important
open problems in algebraic combinatorics. No combinatorial formula for
$b^\lambda_{\alpha,\beta}$ was previously known either, and finding one was
left open as \cite[Question~5.1]{BR2026}.

This paper answers that question in closed form. In
Section~\ref{sec:realization} we realize $\Cb_\lambda$-structures
geometrically as \emph{periodic patterns} shifted diagonally by a
single integer (Proposition~\ref{prop:realization}), making the
cyclic symmetry visible. In Section~\ref{sec:mobius} we exploit the
fact that $\langle\sigma_\alpha\rangle \times \langle\sigma_\beta
\rangle$ acts on $S_n$ through an \emph{abelian} group whose subgroup
lattice is a single totally ordered chain of divisors, so that
$b^\lambda_{\alpha,\beta}$ is given by one explicit M\"obius
inversion (Theorem~\ref{thm:mobius}) over the divisors of
$\gcd(o(\sigma_\alpha), o(\sigma_\beta))$, structurally much
simpler than the Garsia--Remmel shuffle rule.

\section{Background and notation}
\label{sec:background}

A \emph{combinatorial species} is a functor $F \colon \Bij \to \Set$,
where $\Bij$ is the category of finite sets and bijections.

Given $H \leq S_n$, the \emph{molecular species} $X^n/H$ is
defined by
\[
  (X^n/H)[U] = \{\lambda H \mid \lambda \colon [n] \xrightarrow{\;\sim\;} U\},
  \qquad
  (X^n/H)[\sigma](\lambda H) = (\sigma \circ \lambda)H.
\]
Its cycle-index series is
$Z_{X^n/H} = \tfrac{1}{|H|}\sum_{\tau \in H} p_{\lambda(\tau)}$,
where $p_{\lambda(\tau)}$ denotes the power-sum symmetric function
of the cycle type of $\tau$. We refer the reader to
Bergeron--Labelle--Leroux~\cite{BLL1998} and Joyal~\cite{Joyal1981}
for the general theory of species.

For a partition $\alpha = (a_1,\ldots,a_k) \vdash n$, the
\emph{standard permutation}
\[
  \sigma_\alpha
  = (1 \;\cdots\; a_1)(a_1{+}1 \;\cdots\; a_1{+}a_2)
    \cdots \in S_n
\]
has order $A(\alpha) := o(\sigma_\alpha) = \lcm(a_1,\ldots,a_k)$,
and the corresponding molecular species is
$\Cb_\alpha := X^n/\langle\sigma_\alpha\rangle$. By
\cite[Lemma~2.1]{BR2026}, any permutation of cycle type $\alpha$
yields an isomorphic species, so $\Cb_\alpha$ depends only on
$\alpha$.

Recall also that for $H = S_\lambda$ a Young subgroup, $X^n/S_\lambda$
gives the product of species of sets
$\Eb_{\lambda}=\Eb_{\lambda_1}\Eb_{\lambda_2}\cdots\Eb_{\lambda_k}$,
whose cycle index series -- corresponding to the Frobenius transform
of the character $\mathrm{ch}\bigl(\mathrm{Ind}_{S_\lambda}^{S_n}
\mathbf{1}\bigr)$ -- recovers the complete homogeneous symmetric
function $h_\lambda$; the basis $\{\Cb_\lambda\}$ of
Theorem~\ref{thm:basis} below is the analogous construction with
$\langle\sigma_\lambda\rangle \le S_\lambda$ in place of $S_\lambda$.
That is,
\[
\Cb_{\alpha}(\mathbf{x}) = Z_{\Cb_{\alpha}}.
\]

\begin{thm}[\cite{BR2026}]
\label{thm:basis}
The set $\{\Cb_{\lambda}(\mathbf{x})\}_{\lambda\vdash n}$ forms a basis of $\Lambda^n$.
\end{thm}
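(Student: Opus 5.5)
The plan is to compare the family $\{\Cb_\lambda(\mathbf{x})\}_{\lambda\vdash n}$ with the power-sum basis $\{p_\mu\}_{\mu\vdash n}$ of $\Lambda^n$, which I take with rational coefficients, since over $\mathbb{Z}$ neither family spans. Both families are indexed by partitions of $n$, so they have the same cardinality $p(n)=\dim\Lambda^n$. It therefore suffices to show that the transition matrix is triangular with nonzero diagonal entries, for a suitable total order on partitions of $n$.

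\textbf{Step 1: expand the cycle index.} Since $\langle\sigma_\alpha\rangle=\{\sigma_\alpha^j : 0\le j<A(\alpha)\}$, the cycle-index formula recalled in Section~\ref{sec:background} gives
\[
  \Cb_\alpha(\mathbf{x}) = \frac{1}{A(\alpha)}\sum_{j=0}^{A(\alpha)-1} p_{\lambda(\sigma_\alpha^j)} .
\]
Next I compute the cycle type of $\sigma_\alpha^j$. The $j$-th power of a single $a_i$-cycle splits into $\gcd(j,a_i)$ cycles, each of length $a_i/\gcd(j,a_i)$. Hence $\lambda(\sigma_\alpha^j)$ is obtained from $\alpha$ by splitting each part $a_i$ into $\gcd(j,a_i)$ equal pieces. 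In particular $\lambda(\sigma_\alpha^j)$ is a refinement of $\alpha$.

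\textbf{Step 2: isolate the leading term.} The refinement equals $\alpha$ exactly when $\gcd(j,a_i)=1$ for every $i$, that is, when $\gcd(j,A(\alpha))=1$. This happens for $\varphi(A(\alpha))$ values of $j$. For every other $j$, some part is genuinely split, so $\lambda(\sigma_\alpha^j)$ has strictly more parts than $\alpha$. Consequently
\[
  \Cb_\alpha(\mathbf{x}) = \frac{\varphi(A(\alpha))}{A(\alpha)}\, p_\alpha \;+\; \sum_{\ell(\mu)>\ell(\alpha)} c_{\alpha\mu}\, p_\mu,
  \qquad c_{\alpha\mu}\in\mathbb{Q}_{\ge 0}.
\]

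\textbf{Step 3: conclude.} Order the partitions of $n$ by number of parts, breaking ties arbitrarily. In this order the matrix expressing $\{\Cb_\alpha(\mathbf{x})\}$ in the basis $\{p_\mu\}$ is triangular. Its diagonal entries $\varphi(A(\alpha))/A(\alpha)$ are all nonzero, so the matrix is invertible over $\mathbb{Q}$. Since $\{p_\mu\}$ is a basis of $\Lambda^n_{\mathbb{Q}}$, the family $\{\Cb_\lambda(\mathbf{x})\}_{\lambda\vdash n}$ is a basis as well.

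The only delicate point is Step 2: one must check that non-coprime powers never reproduce the shape $\alpha$ itself. This is exactly where the comparison of numbers of parts is needed, because splitting any part strictly increases the length of the partition.
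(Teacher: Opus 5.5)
Your proof is correct and complete. The paper does not prove this statement itself; it quotes it from \cite{BR2026}, so there is no in-text argument to compare with, and your triangularity argument against the power sums supplies a self-contained proof.

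Your argument fits neatly with the paper's own Lemma~\ref{lem:unique_subgroup}. If you group the powers $\sigma_\alpha^j$ by their order $d\mid A(\alpha)$, Steps~1--2 become
\[
\Cb_\alpha(\mathbf{x})=\frac{1}{A(\alpha)}\sum_{d\mid A(\alpha)}\varphi(d)\,p_{\lambda_d(\alpha)} .
\]
Here $\lambda_{A(\alpha)}(\alpha)=\alpha$. For $d<A(\alpha)$, any prime $p$ dividing $A(\alpha)/d$ divides some $a_i$, so that part is split into at least $p$ pieces and $\lambda_d(\alpha)$ is a strict refinement of $\alpha$. The transition matrix is therefore triangular with respect to refinement, and your length order is one linear extension of that partial order.

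One small correction to your opening aside concerns the claim that over $\mathbb{Z}$ ``neither family spans.'' For $n\le 2$ the $\Cb$-family does span $\Lambda^n_{\mathbb{Z}}$, since $\Cb_{(2)}(\mathbf{x})=h_2$ and $\Cb_{(1,1)}(\mathbf{x})=h_1^2$. It fails for $n=3$. There $\Cb_{(3)}(\mathbf{x})=s_3+s_{111}$, $\Cb_{(2,1)}(\mathbf{x})=s_3+s_{21}$ and $\Cb_{(1^3)}(\mathbf{x})=s_3+2s_{21}+s_{111}$, and the transition matrix to the Schur basis has determinant $2$. So working over $\mathbb{Q}$ is necessary in general, as you assumed, but not for every $n$.
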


The \emph{Hadamard product} of two species $F$ and $G$ is
$(F \times G)[U] = F[U] \times G[U]$ with transport maps defined
componentwise; its cycle index satisfies
$Z_{F \times G} = Z_F \star Z_G$
\cite{BLL1998}. As recalled above, \cite{BR2026}
establishes
\[
  \Cb_\alpha \times \Cb_\beta
  = \sum_{\lambda \vdash n} b^\lambda_{\alpha,\beta}\,\Cb_\lambda,
  \qquad b^\lambda_{\alpha,\beta} \in \mathbb{N},
\]
with $b^\lambda_{\alpha,\beta}$ equal to the number of double cosets in
$\DblCoset{\sigma_\alpha}{\sigma_\beta}$ whose stabilizer (under the
action described in Section~\ref{sec:mobius}) has cycle type~$\lambda$;
this correspondence follows from Mackey's theorem
\cite[Lemma~5.2]{BR2026}. These coefficients $b^\lambda_{\alpha,\beta}$
are the direct cyclic-subgroup counterpart of the Garsia--Remmel
structure constants $c^\nu_{\lambda,\mu}$ in
$h_\lambda \star h_\mu = \sum_\nu c^\nu_{\lambda,\mu}\, h_\nu$.

We begin with a geometric realization of $\Cb_\alpha$; although it
plays no direct role in the formula for $b^\lambda_{\alpha,\beta}$,
it may be useful in future work for deriving statistics on this
coefficient.

\section{Geometric realization of \texorpdfstring{$\Cb_\alpha$}{C\_alpha}}
\label{sec:realization}

\subsection{Periodic patterns and their shift equivalence}
\label{sub:patterns}

\begin{defin}[Infinite multi-row periodic pattern]
\label{def:pattern}
Let $\alpha = (a_1,\ldots,a_k) \vdash n$.  An \emph{infinite
multi-row periodic pattern of shape~$\alpha$} on a finite set $U$
with $|U| = n$ is a $k \times \mathbb{Z}$ array
$P = (x_{i,j})_{i \in [k],\, j \in \mathbb{Z}}$ with $x_{i,j} \in U$
satisfying:
\begin{enumerate}[label=(\roman*), itemsep=2pt]
  \item \textbf{Periodicity:} $x_{i,j+a_i} = x_{i,j}$ for all
        $i \in [k]$, $j \in \mathbb{Z}$.
  \item \textbf{Partition condition:} The sets
        $R_i := \{x_{i,j} : 1 \leq j \leq a_i\}$ are pairwise
        disjoint with $\bigcup_{i=1}^k R_i = U$.
\end{enumerate}
The \emph{shift by $s$} is the pattern $\tau_s(P)$ defined by
$\tau_s(P)_{i,j} = x_{i,j+s}$. Two patterns $P$ and $P'$ are
\emph{shift-equivalent} (written $P \sim P'$) if $P' = \tau_s(P)$
for some $s \in \mathbb{Z}$; we write $[P]$ for the equivalence
class.
\end{defin}

\begin{rem}
\label{rem:R_i_independent}
Since row~$i$ has period~$a_i$, the support $R_i$ is independent
of the choice of starting position; it is the set of all values
appearing in row~$i$.
\end{rem}

\begin{rem}[Single versus independent shifts]
\label{rem:single_shift}
The defining feature of this construction is that a \emph{single}
integer~$s$ shifts \emph{all} rows simultaneously. This reflects
the fact that $\langle\sigma_\alpha\rangle$ is a single cyclic
group acting diagonally, not a product of independent cyclic
groups. The companion species
$\Kb_\alpha = X^n/G_\alpha$, where
$G_\alpha = \langle\sigma_{a_1}\rangle \times \cdots \times
\langle\sigma_{a_k}\rangle$, would instead allow one independent
shift per row; see Open Problem~\ref{prob:K} for the corresponding
open question.
\end{rem}

The following example illustrates the three main cases that will
recur throughout the paper.

\begin{ex}[Three shapes for $n = 6$]
\label{ex:patterns}
In each case, one fundamental period per row is highlighted in
\textcolor{MyRed}{red}.

\smallskip
\noindent\textbf{(a) $\alpha = (6)$:} One row of period~$6$:
\[
  \cdots\;\textcolor{MyRed}{2\ 4\ 6\ 1\ 3\ 5}
        \;2\ 4\ 6\ 1\ 3\ 5\;\cdots
\]
The shift order is $\lcm(6) = 6$, giving
$|\Cb_{(6)}\text{-structures on }[6]| = 6!/6 = 120$.

\smallskip
\noindent\textbf{(b) $\alpha = (3,2,1)$:} Three rows with distinct
periods:
\[
\begin{array}{r@{\;}l}
  \cdots & \textcolor{MyRed}{1\ 2\ 3}\ 1\ 2\ 3\ 1\ 2\ 3\ \cdots\\
  \cdots & \textcolor{MyRed}{4\ 5}\ 4\ 5\ 4\ 5\ 4\ 5\ \cdots\\
  \cdots & \textcolor{MyRed}{6}\ 6\ 6\ 6\ \cdots
\end{array}
\]
The shift $\tau_1$ maps this to $(2\,3\,1;\;5\,4;\;6;\ldots)$,
shifting all rows by the \emph{same} $s = 1$.
The shift order is $\lcm(3,2,1) = 6$, so there are $6!/6 = 120$
structures.

\smallskip
\noindent\textbf{(c) $\alpha = (3,3)$:} Two rows of equal
period~$3$:
\[
\begin{array}{r@{\;}l}
  \cdots & \textcolor{MyRed}{1\ 4\ 2}\ 1\ 4\ 2\ 1\ 4\ 2\ \cdots\\
  \cdots & \textcolor{MyRed}{3\ 6\ 5}\ 3\ 6\ 5\ 3\ 6\ 5\ \cdots
\end{array}
\]
The shift order is $\lcm(3,3) = 3$, so there are $6!/3 = 240$
structures. Note that $\Cb_{(3,3)}$ uses a \emph{single}
$s \in \mathbb{Z}_3$ shifting both rows simultaneously, whereas
$\Kb_{(3,3)}$ would allow the two rows to shift independently.
\end{ex}

\subsection{Bijection with \texorpdfstring{$\Cb_\alpha$}{C\_alpha}-structures}

\begin{prop}[Geometric realization of $\Cb_\alpha$]
\label{prop:realization}
Let $\alpha = (a_1,\ldots,a_k) \vdash n$ and $|U| = n$. There is
a natural bijection
\[
  \Phi \colon \{\Cb_\alpha\text{-structures on }U\}
  \;\xrightarrow{\;\;\sim\;\;}
  \{[P] : P \text{ a pattern of shape }\alpha\text{ on }U\}.
\]
Under $\Phi$:
\begin{enumerate}[label=(\roman*), itemsep=2pt]
  \item The number of structures equals $n!/A(\alpha)$.
  \item Transport along $f \colon U \to V$ corresponds to
        entry-wise relabeling: $f \cdot [P] = [f \circ P]$.
\end{enumerate}
\end{prop}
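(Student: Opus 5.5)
We construct $\Phi$ explicitly. Fix $\alpha=(a_1,\ldots,a_k)$ and write $[n]$ as the disjoint union of the consecutive blocks $B_i=\{a_1+\cdots+a_{i-1}+1,\ldots,a_1+\cdots+a_i\}$, which are exactly the cycles of $\sigma_\alpha$. Set $c_i:=a_1+\cdots+a_{i-1}$. Given a bijection $\lambda\colon[n]\to U$, define the pattern $P_\lambda$ by
\[
  x_{i,j}=\lambda\bigl(c_i+((j-1)\bmod a_i)+1\bigr),
\]
so row $i$ lists $\lambda$ restricted to the cycle $B_i$ in cyclic order, repeated with period $a_i$. Periodicity holds by construction. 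The partition condition holds because $R_i=\lambda(B_i)$, and $\lambda$ is a bijection. The key identity is
\[
  P_{\lambda\circ\sigma_\alpha^{s}}=\tau_s(P_\lambda)\qquad(s\in\mathbb{Z}).
\]
To check it, note that $\sigma_\alpha$ sends $c_i+r$ to $c_i+(r\bmod a_i)+1$, so precomposing with $\sigma_\alpha^s$ advances the position inside every block $B_i$ by $s$ simultaneously. This is exactly where the single diagonal shift of Remark~\ref{rem:single_shift} appears. Hence $\Phi(\lambda\langle\sigma_\alpha\rangle):=[P_\lambda]$ is well defined on cosets.

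For bijectivity we argue as follows.

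\textbf{Surjectivity.} Given any pattern $P$, define $\lambda(c_i+r)=x_{i,r}$ for $1\le r\le a_i$. The partition condition, combined with $|R_i|\le a_i$ and $\sum a_i=n=|U|$, forces $|R_i|=a_i$ and makes $\lambda$ a bijection. By construction $P_\lambda=P$.

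\textbf{Injectivity.} Suppose $[P_\lambda]=[P_\mu]$, say $P_\mu=\tau_s(P_\lambda)=P_{\lambda\circ\sigma_\alpha^s}$. The map $\lambda\mapsto P_\lambda$ is injective, since $\lambda$ is recovered from one fundamental period of each row. Therefore $\mu=\lambda\circ\sigma_\alpha^s$, and so $\mu\langle\sigma_\alpha\rangle=\lambda\langle\sigma_\alpha\rangle$.

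Everything rests on the shift identity, and this is the step I expect to need the most care: the modular indexing must be handled carefully, and the shift must be checked to act as $\sigma_\alpha$ (not $\sigma_\alpha^{-1}$) under the chosen convention $\tau_s(P)_{i,j}=x_{i,j+s}$. If the sign comes out reversed, this is harmless, because $\langle\sigma_\alpha\rangle$ is closed under inverses.

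Item (i) then follows immediately. The structures are the left cosets of $\langle\sigma_\alpha\rangle$ in the set of bijections $[n]\to U$, which has cardinality $n!$. Since $\langle\sigma_\alpha\rangle$ acts freely by right composition, there are $n!/|\langle\sigma_\alpha\rangle|=n!/A(\alpha)$ cosets. Equivalently, each shift class contains exactly $A(\alpha)$ patterns, since $\tau_s(P)=P$ iff $a_i\mid s$ for all $i$, i.e.\ iff $\lcm(a_1,\ldots,a_k)\mid s$.

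For item (ii), transport in $X^n/H$ is $f\cdot\lambda H=(f\circ\lambda)H$. Directly from the formula, $P_{f\circ\lambda}=f\circ P_\lambda$ entrywise. Since relabeling commutes with $\tau_s$, the map $[P]\mapsto[f\circ P]$ is well defined on classes, and $\Phi(f\cdot\lambda H)=f\cdot\Phi(\lambda H)$. This gives naturality.
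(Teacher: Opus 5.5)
Your proof is correct and follows the paper's argument essentially step for step. Both build the explicit coset-to-pattern map, use the shift identity $P_{\lambda\circ\sigma_\alpha^s}=\tau_s(P_\lambda)$ for well-definedness and injectivity, reconstruct $\lambda$ from one period of each row for surjectivity, count cosets, and check transport entrywise. Where you differ, you are slightly more careful than the paper: you derive $|R_i|=a_i$ from $\sum_i a_i=n$ to justify that the reconstructed $\lambda$ is a bijection, which the paper simply asserts, and you add the alternative orbit-stabilizer count of shift classes.
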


\begin{proof}
\textbf{Construction of $\Phi$.}  A $\Cb_\alpha$-structure on $U$
is a left coset $\lambda\langle\sigma_\alpha\rangle$ with
$\lambda \colon [n] \xrightarrow{\sim} U$.  Write the cycles of
$\sigma_\alpha$ as $(c_{i,1},\ldots,c_{i,a_i})$ for $i \in [k]$.
Define the pattern $P(\lambda)$ by $x_{i,j} = \lambda(c_{i,j})$,
and set $\Phi(\lambda\langle\sigma_\alpha\rangle) = [P(\lambda)]$.

\textbf{Well-definedness.}  If $\lambda' = \lambda \circ
\sigma_\alpha^s$, then $(c_{i,1},\ldots,c_{i,a_i})$ is replaced
by $(c_{i,1+s},\ldots,c_{i,a_i+s})$ (indices mod~$a_i$), so
$P(\lambda') = \tau_s(P(\lambda))$. Hence $[P(\lambda)]$ depends
only on the coset $\lambda\langle\sigma_\alpha\rangle$.

\textbf{Injectivity.}  If $[P(\lambda)] = [P(\lambda')]$, then
$P(\lambda') = \tau_s(P(\lambda))$ for some $s$, which gives
$\lambda'(c_{i,j}) = \lambda(c_{i,j+s})$ for all $i, j$. This
means $\lambda' = \lambda \circ \sigma_\alpha^s$, so $\lambda'$
and $\lambda$ lie in the same coset.

\textbf{Surjectivity.}  Given a valid pattern $P$ on $U$, define
$\lambda \colon [n] \to U$ by $\lambda(c_{i,j}) := x_{i,j}$.
The partition condition ensures $\lambda$ is a bijection, and
$P(\lambda) = P$.

\textbf{Count.}  There are $|S_n| = n!$ bijections $\lambda$,
grouped into cosets of size $|\langle\sigma_\alpha\rangle|
= A(\alpha)$, giving $n!/A(\alpha)$ structures. Functoriality
follows immediately from the entry-wise definition.
\end{proof}

\section{A M\"obius-inversion formula for $b^\lambda_{\alpha,\beta}$}
\label{sec:mobius}

Double coset enumeration in classical groups has long been studied,
especially in the context of Coxeter groups and Hecke algebras. Its
applications in algebraic combinatorics are very rich (see, e.g.,
\cite{LYWL2020,BGR2014,GJ1996}). Very recently, Schwob~\cite{Schwob2025}
developed general formulas for the double cosets $H \backslash G / H$
of a subgroup $H$ in a classical group $G$, including the symmetric
groups relevant here, together with a treatment of their self-inverse
members. This complements the well-developed theory of parabolic
double cosets $W_I \backslash W / W_J$ in general Coxeter groups
$W$~\cite{BKPST2017}: our setting is the abelian, purely
cyclic-subgroup case, which is why the enumeration collapses to a
single M\"obius inversion rather than the more intricate
combinatorics needed for general (non-abelian) parabolic subgroups.
The present section specializes such
considerations to the particular case of two \emph{cyclic}
subgroups $\langle\sigma_\alpha\rangle, \langle\sigma_\beta\rangle
\le S_n$ acting by left and right translation; since the resulting
double-translation group is abelian (Lemma~\ref{lem:orbit_invariant}),
we can push the enumeration all the way to a single closed-form
M\"obius inversion.

Throughout, fix $\alpha,\beta \vdash n$ and write
$A := A(\alpha) = o(\sigma_\alpha)$, $B := A(\beta) = o(\sigma_\beta)$,
and $D := \gcd(A,B)$.

\subsection{The double translation action and its stabilizers}
\label{sub:action}

The group $G := \langle\sigma_\alpha\rangle \times
\langle\sigma_\beta\rangle$ acts on $S_n$ by
\[
  (x,y)\cdot \pi := \sigma_\alpha^x\, \pi\, \sigma_\beta^{-y},
  \qquad x \in \mathbb{Z}_A,\ y \in \mathbb{Z}_B,
\]
and the orbits of this action are exactly the double cosets
$\langle\sigma_\alpha\rangle \pi \langle\sigma_\beta\rangle$. The
stabilizer of $\pi$ is
\[
  \mathrm{Stab}(\pi)
  \;\cong\; H_\pi := \langle\sigma_\alpha\rangle
    \cap \pi\langle\sigma_\beta\rangle\pi^{-1},
\]
via the projection $(x,y) \mapsto \sigma_\alpha^x$, and
$b^\lambda_{\alpha,\beta}$ counts the orbits for which a generator
of $H_\pi$ (a cyclic group) has cycle type $\lambda$.

\begin{lem}[Stabilizers are orbit-invariant]
\label{lem:orbit_invariant}
Since $G$ is abelian, $H_\pi$ depends only on the double coset of
$\pi$: if $\pi' = (x,y)\cdot\pi$, then $H_{\pi'} = H_\pi$.
Consequently every double coset $C$ has a well-defined stabilizer
order $d_C := |H_\pi|$ for any $\pi \in C$, and
\[
  |C| = \frac{AB}{d_C}.
\]
\end{lem}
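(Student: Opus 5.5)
The plan is a direct computation of $H_{\pi'}$ from the definition, followed by an application of the orbit--stabilizer theorem. Let $\pi' = (x,y)\cdot\pi = \sigma_\alpha^x\,\pi\,\sigma_\beta^{-y}$. First compute the conjugate of $\langle\sigma_\beta\rangle$ by $\pi'$:
\[
  \pi'\langle\sigma_\beta\rangle\pi'^{-1}
  = \sigma_\alpha^x\,\pi\,\sigma_\beta^{-y}\langle\sigma_\beta\rangle\sigma_\beta^{y}\,\pi^{-1}\sigma_\alpha^{-x}
  = \sigma_\alpha^x\bigl(\pi\langle\sigma_\beta\rangle\pi^{-1}\bigr)\sigma_\alpha^{-x}.
\]
The second equality holds because $\langle\sigma_\beta\rangle$ is abelian, so conjugation by $\sigma_\beta^{-y}$ fixes it.

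Next, intersect with $\langle\sigma_\alpha\rangle$. Conjugation by $\sigma_\alpha^x$ is a group automorphism of $S_n$, so it commutes with intersections. It also fixes the cyclic group $\langle\sigma_\alpha\rangle$ setwise, and in fact pointwise, since that group is abelian. Hence
\[
  H_{\pi'} = \sigma_\alpha^x\bigl(\langle\sigma_\alpha\rangle\cap\pi\langle\sigma_\beta\rangle\pi^{-1}\bigr)\sigma_\alpha^{-x} = \sigma_\alpha^x H_\pi\sigma_\alpha^{-x} = H_\pi .
\]
The last step uses that $H_\pi\le\langle\sigma_\alpha\rangle$ is centralized by $\sigma_\alpha^x$.

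This is the main point to state carefully. The abelianness that matters is that of each cyclic factor, which is exactly what makes $G=\langle\sigma_\alpha\rangle\times\langle\sigma_\beta\rangle$ abelian. Abstractly, stabilizers along an orbit are conjugate in $G$, and conjugation in an abelian group is trivial. I would remark that this gives equality of the stabilizers $\mathrm{Stab}(\pi')=\mathrm{Stab}(\pi)$ as subgroups of $G$ directly. Transporting this through the projection isomorphism $(x,y)\mapsto\sigma_\alpha^x$ gives the same equality $H_{\pi'}=H_\pi$, consistent with the computation above.

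Since $H_\pi$ is constant on each double coset $C$, the order $d_C:=|H_\pi|$ is well defined. The double coset $C$ is the $G$-orbit of $\pi$, and $|G|=AB$ because $o(\sigma_\alpha)=A$ and $o(\sigma_\beta)=B$. The orbit--stabilizer theorem together with $\mathrm{Stab}(\pi)\cong H_\pi$ then gives
\[
  |C| = \frac{|G|}{|\mathrm{Stab}(\pi)|} = \frac{AB}{d_C}.
\]

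The only step I would check explicitly is the isomorphism $\mathrm{Stab}(\pi)\cong H_\pi$ that the paper asserts. The condition $(x,y)\in\mathrm{Stab}(\pi)$ means $\sigma_\alpha^x=\pi\sigma_\beta^{y}\pi^{-1}$, so the projection lands in $H_\pi$. Every element of $H_\pi$ arises this way, so the projection is onto $H_\pi$. It is injective because $x\equiv 0$ forces $\sigma_\beta^y=1$, that is, $y\equiv0 \pmod B$.
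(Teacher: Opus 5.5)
Your proof is correct and is essentially the paper's argument. The paper notes that $\mathrm{Stab}(g\cdot\pi)=g\,\mathrm{Stab}(\pi)\,g^{-1}=\mathrm{Stab}(\pi)$ because $G$ is abelian and then applies orbit--stabilizer, which is your closing remark; your explicit computation $H_{\pi'}=\sigma_\alpha^{x}H_\pi\sigma_\alpha^{-x}=H_\pi$ unpacks the same fact concretely, and your check that the projection $\mathrm{Stab}(\pi)\to H_\pi$ is an isomorphism fills in a step the paper only asserts (as a small refinement, removing $\sigma_\beta^{-y}$ needs no abelianness, since any subgroup is normalized by its own elements, so your computation only uses that $\langle\sigma_\alpha\rangle$ normalizes $H_\pi$).
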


\begin{proof}
For $g \in G$, $\mathrm{Stab}(g\cdot\pi) = g\,\mathrm{Stab}(\pi)\,g^{-1}$.
Since $G$ is abelian, conjugation by $g$ is trivial, so
$\mathrm{Stab}(g\cdot\pi) = \mathrm{Stab}(\pi)$. The orbit-size
formula is then the orbit-stabilizer theorem applied to the
(constant) stabilizer order $d_C$.
\end{proof}

\subsection{Cyclic subgroups force the stabilizer type}
\label{sub:unique}

\begin{lem}[Unique subgroup per order, with forced cycle type]
\label{lem:unique_subgroup}
For each divisor $d \mid A$, the cyclic group $\langle\sigma_\alpha\rangle$
has a unique subgroup of order $d$, namely
$\langle \sigma_\alpha^{A/d}\rangle$. Writing
$\alpha = (a_1,\ldots,a_k)$, every generator of this subgroup has
the same cycle type
\[
  \lambda_d(\alpha) :=
  \bigsqcup_{i=1}^k
  \Bigl(\underbrace{\tfrac{a_i}{\gcd(a_i,A/d)},\ \ldots,\
  \tfrac{a_i}{\gcd(a_i,A/d)}}_{\gcd(a_i,A/d)\ \mathrm{parts}}\Bigr)
  \ \vdash\ n.
\]
\end{lem}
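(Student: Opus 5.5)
The plan splits into two independent parts: first the group-theoretic uniqueness of the subgroup, then the cycle-type computation for its generators.

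\textbf{Uniqueness of the subgroup.} The group $\langle\sigma_\alpha\rangle$ is cyclic of order $A$. I will use the standard fact that a cyclic group of order $A$ has exactly one subgroup of each order $d \mid A$. Identify $\langle\sigma_\alpha\rangle$ with $\mathbb{Z}_A$ via $\sigma_\alpha^x \mapsto x$. A subgroup of $\mathbb{Z}_A$ is generated by its least positive element $m$, and this forces $m \mid A$ and order $A/m$. Hence the only subgroup of order $d$ is the one generated by $m = A/d$, namely $\langle\sigma_\alpha^{A/d}\rangle$. Its generators are exactly the powers $\sigma_\alpha^{tA/d}$ with $\gcd(t,d)=1$.

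\textbf{Cycle type of a generator.} The key computation is the cycle type of an arbitrary power $\sigma_\alpha^{e}$. Since $\sigma_\alpha$ is a product of disjoint cycles $c_i$ of lengths $a_i$, we have $\sigma_\alpha^e = \prod_i c_i^e$, still with disjoint supports. So it suffices to treat a single $a$-cycle $c$. The power $c^e$ acts on the support $\mathbb{Z}_a$ as $j \mapsto j+e$. Its orbits are the cosets of the subgroup $\langle e\rangle = \langle \gcd(a,e)\rangle \le \mathbb{Z}_a$. There are therefore $\gcd(a,e)$ orbits, each of size $a/\gcd(a,e)$. Thus $c_i^e$ contributes $\gcd(a_i,e)$ parts equal to $a_i/\gcd(a_i,e)$.

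\textbf{Independence from the choice of generator.} It remains to check that the cycle type for $e = tA/d$ with $\gcd(t,d)=1$ agrees with the one for $t=1$. That is, I must show $\gcd(a_i, tA/d) = \gcd(a_i, A/d)$ for every $i$. This is the only real obstacle. The cleanest route avoids gcd arithmetic altogether. The cycle type of $g^t$ equals that of $g$ whenever $\gcd(t, o(g)) = 1$, because then $g$ is itself a power of $g^t$. The two elements therefore generate the same cyclic group, and the orbits of $\langle g\rangle$ on $[n]$ are exactly the cycles of $g$. So $g$ and $g^t$ have identical orbit decompositions, hence the same cycle type.

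Here $g = \sigma_\alpha^{A/d}$ has order $d$ and $\gcd(t,d)=1$, so every generator has the cycle type of $\sigma_\alpha^{A/d}$. By the computation in the previous step, this cycle type is $\lambda_d(\alpha)$. As a final sanity check, the parts sum to $\sum_i \gcd(a_i,A/d)\cdot a_i/\gcd(a_i,A/d) = \sum_i a_i = n$.
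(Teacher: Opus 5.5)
Your proof is correct and follows essentially the same route as the paper: uniqueness of the order-$d$ subgroup of a cyclic group, the splitting of each $a_i$-cycle into $\gcd(a_i,A/d)$ cycles under the $(A/d)$-th power, and invariance of the cycle type under coprime powers. Your justification of the last step (all generators of $\langle\sigma_\alpha^{A/d}\rangle$ generate the same group and hence have the same orbits on $[n]$) is a tidier, group-level version of the paper's cycle-by-cycle remark that a coprime power of an $\ell$-cycle is again an $\ell$-cycle, and you also supply the explicit coset computation for the splitting that the paper only asserts.
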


\begin{proof}
Uniqueness of the order-$d$ subgroup of a cyclic group is standard.
Its generators are $\sigma_\alpha^{(A/d)e}$ for $e$ coprime to $d$;
raising a single cycle of length $\ell \mid A$ to a power coprime
to $d$ (hence coprime to any divisor of $\ell$ dividing $d$, and in
particular to $\gcd(\ell,d)$, acting on the relevant orbit)
preserves the cycle-length partition of that cycle, since raising
an $\ell$-cycle to a power coprime to $\ell$ yields a single
$\ell$-cycle. Applying $\sigma_\alpha^{A/d}$ to the $i$-th cycle
(length $a_i$) splits it into $\gcd(a_i,A/d)$ cycles of length
$a_i/\gcd(a_i,A/d)$, and this partition is unchanged by any further
coprime power.
\end{proof}

\begin{ex}[Illustrating $\lambda_d(\alpha)$]
\label{ex:lambda_d}
Let $\alpha = (4,2) \vdash 6$, so $A = \lcm(4,2) = 4$. For each
divisor $d \mid 4$, split each part $a_i$ into $\gcd(a_i,A/d)$
equal pieces of size $a_i/\gcd(a_i,A/d)$:
\[
\begin{array}{c|c|c|c}
d & A/d & \text{split of }4 & \text{split of }2 \\ \hline
4 & 1 & 4 & 2 \\
2 & 2 & 2,2 & 1,1 \\
1 & 4 & 1,1,1,1 & 1,1
\end{array}
\]
so $\lambda_4(\alpha) = (4,2)$, $\lambda_2(\alpha) = (2,2,1,1)$, and
$\lambda_1(\alpha) = (1^6)$. Concretely, $\lambda_2(\alpha)$ is the
cycle type of $\sigma_\alpha^2$: squaring a $4$-cycle gives two
$2$-cycles, and squaring a $2$-cycle gives two fixed points.
\end{ex}

Consequently, if $H_\pi$ has order $d$, it is \emph{forced} to equal
both $\langle\sigma_\alpha^{A/d}\rangle$ (cycle type
$\lambda_d(\alpha)$) and $\pi\langle\sigma_\beta^{B/d}\rangle\pi^{-1}$
(cycle type $\lambda_d(\beta)$, since conjugation preserves cycle
type). Hence a double coset with stabilizer order $d$ can exist
only if
\[
  d \mid D = \gcd(A,B) \qquad\text{and}\qquad
  \lambda_d(\alpha) = \lambda_d(\beta),
\]
in which case its stabilizer type is exactly
$\lambda = \lambda_d(\alpha) = \lambda_d(\beta)$. Since the order of a
permutation of cycle type $\lambda$ is the $\lcm$ of its parts, distinct
divisors $d$ give distinct partitions $\lambda$; thus $d$ is recovered
from $\lambda$ as $d = \lcm(\lambda)$, and the correspondence $d
\leftrightarrow \lambda$ is a bijection onto its image.

\subsection{The counting formula}

For $d \mid D$, define
\[
  F(d) := \#\bigl\{\pi \in S_n :
    \sigma_\alpha^{A/d} \in \pi\langle\sigma_\beta\rangle\pi^{-1}
  \bigr\}
  \;=\; \#\{\pi \in S_n : d \mid d_{\langle\pi\rangle} \},
\]
where $d_{\langle\pi\rangle}$ denotes the stabilizer order of the
double coset containing $\pi$ (the equality holds because
$\langle\sigma_\alpha\rangle$'s subgroups are totally ordered by
divisibility, so containing the order-$d$ subgroup is the same as
having order divisible by $d$).

\begin{prop}
\label{prop:F_formula}
\[
  F(d) = \varphi(d)\cdot z_{\lambda_d(\alpha)}
    \cdot [\lambda_d(\alpha) = \lambda_d(\beta)],
\]
where $\varphi$ is Euler's totient function and, for a partition
$\lambda$ with $m_i$ parts equal to $i$, $z_\lambda := \prod_i
i^{m_i} m_i!$ is the order of the centralizer in $S_n$ of a
permutation of cycle type $\lambda$~\cite[\S1.2]{JK1985}.
\end{prop}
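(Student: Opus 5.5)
The plan is to count directly from the definition of $F(d)$, rewriting the condition on $\pi$ as a conjugation condition on a fixed element. Put $\tau := \sigma_\alpha^{A/d}$. By Lemma~\ref{lem:unique_subgroup}, $\tau$ has order exactly $d$ and cycle type $\lambda_d(\alpha)$. The defining condition $\tau \in \pi\langle\sigma_\beta\rangle\pi^{-1}$ is equivalent to
\[
  \pi^{-1}\tau\pi \in \langle\sigma_\beta\rangle .
\]
So $F(d)$ counts the $\pi \in S_n$ whose conjugate $\pi^{-1}\tau\pi$ lands in the cyclic group $\langle\sigma_\beta\rangle$.

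\textbf{Step 1: identify the possible targets.} Conjugation preserves order and cycle type. Hence $g := \pi^{-1}\tau\pi$ is an element of order $d$ in $\langle\sigma_\beta\rangle$, and it has cycle type $\lambda_d(\alpha)$.

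Since $d \mid D$ divides $B$, Lemma~\ref{lem:unique_subgroup} applied to $\beta$ gives three facts:
\begin{itemize}
  \item the elements of order $d$ in $\langle\sigma_\beta\rangle$ are exactly the generators of the unique order-$d$ subgroup $\langle\sigma_\beta^{B/d}\rangle$;
  \item there are exactly $\varphi(d)$ of them;
  \item all of them have cycle type $\lambda_d(\beta)$.
\end{itemize}
Consequently, if $\lambda_d(\alpha) \neq \lambda_d(\beta)$, no such $g$ exists and $F(d)=0$. This accounts for the Iverson bracket.

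\textbf{Step 2: count the $\pi$ for each target.} Assume $\lambda_d(\alpha) = \lambda_d(\beta)$. Then each of the $\varphi(d)$ generators $g$ of $\langle\sigma_\beta^{B/d}\rangle$ is conjugate to $\tau$ in $S_n$, because they have the same cycle type.

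For a fixed such $g$, choose one $\pi_0$ with $\pi_0^{-1}\tau\pi_0 = g$. The full solution set of $\pi^{-1}\tau\pi = g$ is then the coset $C_{S_n}(\tau)\,\pi_0$. Its size is $|C_{S_n}(\tau)| = z_{\lambda_d(\alpha)}$, by the standard centralizer formula cited in the statement.

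The solution sets for distinct $g$ are disjoint, since $g$ is determined by $\pi$. Summing over the $\varphi(d)$ targets gives
\[
  F(d) = \varphi(d)\, z_{\lambda_d(\alpha)} .
\]

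\textbf{Main obstacle.} The only point that needs care is Step 1. One must check that an element of $\langle\sigma_\beta\rangle$ of order $d$ automatically generates the order-$d$ subgroup, and that every such generator has the \emph{same} cycle type $\lambda_d(\beta)$. This is exactly the uniqueness and forced-type content of Lemma~\ref{lem:unique_subgroup}, so it can be invoked directly.

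The second description of $F(d)$ in terms of $d_{\langle\pi\rangle}$ is not needed for this proof. The argument works only with the first, explicit definition.
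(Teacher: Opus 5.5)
Your proof is correct and follows the paper's argument: you rewrite the condition as $\pi^{-1}\sigma_\alpha^{A/d}\pi$ being one of the $\varphi(d)$ generators of $\langle\sigma_\beta^{B/d}\rangle$, all of cycle type $\lambda_d(\beta)$, get $0$ on a cycle-type mismatch, and otherwise count $z_{\lambda_d(\alpha)}$ solutions per target. You also spell out two points the paper leaves implicit (that the conjugate must have order exactly $d$, and that the fibres over distinct targets are disjoint right cosets of $C_{S_n}(\sigma_\alpha^{A/d})$), which tightens the argument.
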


\begin{proof}
By Lemma~\ref{lem:unique_subgroup}, $\sigma_\alpha^{A/d}$ lies in
$\pi\langle\sigma_\beta\rangle\pi^{-1}$ if and only if
$\pi^{-1}\sigma_\alpha^{A/d}\pi$ is one of the $\varphi(d)$
generators of the unique order-$d$ subgroup
$\langle\sigma_\beta^{B/d}\rangle$, all of which share cycle type
$\lambda_d(\beta)$. If $\lambda_d(\alpha) \neq \lambda_d(\beta)$,
no such conjugation is possible and $F(d)=0$. Otherwise, for each
fixed generator $y$, the number of $\pi$ with
$\pi^{-1}\sigma_\alpha^{A/d}\pi = y$ equals the centralizer order
$z_{\lambda_d(\alpha)}$, and there are $\varphi(d)$ choices of $y$.
\end{proof}

\begin{thm}[Closed-form formula for $b^\lambda_{\alpha,\beta}$]
\label{thm:mobius}
Let $\lambda \vdash n$ with $d := \lcm(\lambda) \mid D$ and
$\lambda_d(\alpha) = \lambda_d(\beta) = \lambda$. Then
\[
{\ b^\lambda_{\alpha,\beta}
  = \frac{d}{AB}\sum_{\substack{d \mid d' \\ d' \mid D}}
    \mu\!\left(\frac{d'}{d}\right) F(d')\,}
\]
where $\mu$ is the classical (number-theoretic) M\"obius function
and $F$ is as in Proposition~\ref{prop:F_formula}. If no such $d$
exists, $b^\lambda_{\alpha,\beta} = 0$.
\end{thm}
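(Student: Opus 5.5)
We prove the statement by Möbius inversion on the divisor lattice of $D$, counting permutations $\pi$ according to the exact stabilizer order of their double coset. For each $e \mid D$, let $N(e)$ be the number of double cosets $C$ with $d_C = e$. Let $G(e)$ be the number of permutations $\pi \in S_n$ whose double coset has stabilizer order exactly $e$.

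By Lemma~\ref{lem:orbit_invariant}, every such coset has size $AB/e$, so
\[
G(e) = \frac{AB}{e}\,N(e).
\]
The stabilizer order $d_C = |H_\pi|$ divides $A$. It also divides $B$, since $H_\pi$ is a subgroup of $\pi\langle\sigma_\beta\rangle\pi^{-1}$. Hence $d_C$ divides $D$, and every permutation is counted by exactly one $G(e)$ with $e \mid D$.

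The defining identity for $F$ stated just before Proposition~\ref{prop:F_formula} gives
\[
F(d) = \sum_{d \mid e \mid D} G(e).
\]
This rests on the fact that $H_\pi \le \langle\sigma_\alpha\rangle$ contains the unique order-$d$ subgroup $\langle\sigma_\alpha^{A/d}\rangle$ if and only if $d \mid |H_\pi|$. We would spell out this step: $\sigma_\alpha^{A/d} \in \pi\langle\sigma_\beta\rangle\pi^{-1}$ holds if and only if $\langle\sigma_\alpha^{A/d}\rangle \le H_\pi$, which by uniqueness of subgroups in the cyclic group $H_\pi$ holds if and only if $d \mid |H_\pi|$. We then apply Möbius inversion on the divisors of $D$ in its dual form, summing over multiples: substituting $e = d m$ with $m \mid D/d$ gives
\[
G(d) = \sum_{d \mid d' \mid D} \mu(d'/d)\,F(d').
\]
Dividing by $AB/d$ yields $N(d) = \frac{d}{AB}\sum_{d \mid d' \mid D} \mu(d'/d)\,F(d')$.

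It remains to identify $N(d)$ with $b^\lambda_{\alpha,\beta}$. By the Mackey description recalled in Section~\ref{sec:background}, $b^\lambda_{\alpha,\beta}$ counts the double cosets whose stabilizer, generated by any generator of $H_\pi$, has cycle type $\lambda$. By Lemma~\ref{lem:unique_subgroup} and the discussion following it, a double coset with $d_C = e$ has stabilizer type $\lambda_e(\alpha)$. Moreover, $e$ is recovered as the lcm of the parts of $\lambda_e(\alpha)$, because $\lambda_e(\alpha)$ is the cycle type of an element of order $e$. Hence the double cosets of type $\lambda$ are exactly those with $d_C = \lcm(\lambda) = d$, so $b^\lambda_{\alpha,\beta} = N(d)$.

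For the vanishing clause, suppose $\lambda$ is not of the form $\lambda_e(\alpha) = \lambda_e(\beta)$ for $e = \lcm(\lambda)$ dividing $D$. Then no coset can have stabilizer type $\lambda$: a coset with $d_C = e$ forces $e \mid D$ and $\lambda_e(\alpha) = \lambda_e(\beta)$, as already argued. So $b^\lambda_{\alpha,\beta} = 0$.

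The main obstacle is justifying carefully the identification of $F(d)$ with the number of $\pi$ satisfying $d \mid d_{\langle\pi\rangle}$, together with the bijection between $d$ and $\lambda$. Everything else is formal Möbius inversion on a finite divisor lattice.
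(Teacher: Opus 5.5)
Your proposal is correct and follows essentially the paper's argument: group permutations by the exact stabilizer order of their double coset, write $F(d)$ as $\sum_{d\mid d'\mid D} N_{d'}\cdot AB/d'$, Möbius-invert on the divisor lattice of $D$, and then identify $N_d$ with $b^\lambda_{\alpha,\beta}$ via $d=\lcm(\lambda)$. Your auxiliary count $G(e)$ and your explicit check that $d_C \mid B$ (which the paper attributes, somewhat loosely, to Lemma~\ref{lem:orbit_invariant}) are only presentational refinements.
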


\begin{proof}
By Lemma~\ref{lem:orbit_invariant}, each double coset $C$ has a
well-defined stabilizer order $d_C \mid D$, and $|C| = AB/d_C$. Set
$N_{d} := b^{\lambda}_{\alpha,\beta}$ for $\lambda =
\lambda_d(\alpha)=\lambda_d(\beta)$, i.e.\ the number of double
cosets with $d_C = d$ (and $N_d := 0$ if
$\lambda_d(\alpha)\ne\lambda_d(\beta)$).

We first check that $F(d)$ literally counts the elements $\pi$
whose double coset has stabilizer order divisible by $d$. By
definition $H_\pi = \langle\sigma_\alpha\rangle \cap
\pi\langle\sigma_\beta\rangle\pi^{-1}$, so the condition
$\sigma_\alpha^{A/d} \in \pi\langle\sigma_\beta\rangle\pi^{-1}$
defining $F(d)$ says exactly that $\sigma_\alpha^{A/d} \in H_\pi$.
Since $H_\pi \le \langle\sigma_\alpha\rangle$ is cyclic and
$\sigma_\alpha^{A/d}$ generates the unique order-$d$ subgroup of
$\langle\sigma_\alpha\rangle$ (Lemma~\ref{lem:unique_subgroup}),
and subgroups of a cyclic group are totally ordered by divisibility
of their orders, $\sigma_\alpha^{A/d}$ lies in $H_\pi$ if and only
if $H_\pi$ contains that order-$d$ subgroup, i.e.\ if and only if
$d \mid |H_\pi| = d_{\langle\pi\rangle}$. Hence
\[
  F(d) = \#\{\pi \in S_n : d \mid d_{\langle\pi\rangle}\}
       = \sum_{\pi \,:\, d \mid d_{\langle\pi\rangle}} 1.
\]

Now partition this index set according to the double coset
$C(\pi) := \langle\sigma_\alpha\rangle\pi\langle\sigma_\beta\rangle$
containing $\pi$. By Lemma~\ref{lem:orbit_invariant}, the value
$d_{\langle\pi\rangle}$ depends only on $C(\pi)$, not on the chosen
representative $\pi$; write $d_C$ for this common value. So the
condition "$d \mid d_{\langle\pi\rangle}$" is constant on each
double coset — it either holds for every element of $C$ or for
none — and therefore selects some double cosets \emph{in their
entirety} and excludes the rest completely:
\[
  \{\pi \in S_n : d \mid d_{\langle\pi\rangle}\}
  \;=\; \bigsqcup_{\substack{C \subseteq S_n \text{ a double coset} \\ d \mid d_C}} C,
\]
a disjoint union because distinct double cosets are disjoint.
Summing $|C| = AB/d_C$ over this union gives
\[
  F(d) = \sum_{\substack{C \,:\, d \mid d_C}} |C|
       = \sum_{\substack{C \,:\, d \mid d_C}} \frac{AB}{d_C}.
\]
Finally, regroup the double cosets on the right by the \emph{exact}
value $d' := d_C$ of their stabilizer order. Since $d_C$ always
divides $D$ (Lemma~\ref{lem:orbit_invariant}), $d'$ ranges over
divisors of $D$ with $d \mid d'$, and by definition of $N_{d'}$
there are exactly $N_{d'}$ double cosets with $d_C = d'$, each
contributing $AB/d'$ to the sum. This yields
\[
  F(d) = \sum_{\substack{d \mid d' \\ d' \mid D}} N_{d'}\cdot \frac{AB}{d'}.
\]

This is a divisor-sum relation on the poset of divisors of $D$
between $d$ and $D$; M\"obius inversion on this poset gives
\[
  N_d \cdot \frac{AB}{d} = \sum_{d\mid d'\mid D}
  \mu\!\left(\frac{d'}{d}\right) F(d'),
\]
which rearranges to the stated formula.
\end{proof}

\begin{cor}[Total count and the classical case]
\label{cor:total}
Summing over all valid $d \mid D$,
\[
  \sum_{\lambda \vdash n} b^\lambda_{\alpha,\beta}
  = \bigl|\DblCoset{\sigma_\alpha}{\sigma_\beta}\bigr|.
\]
For $\alpha = \beta = (n)$ (so $A=B=D=n$), $\lambda_d(\alpha) =
\lambda_d(\beta)$ automatically for every $d \mid n$, and
Theorem~\ref{thm:mobius} reduces to a M\"obius-inversion count of
classical Steggall patterns~\cite{Steggall1907,Cameron2002}, i.e.\
of permutation matrices up to simultaneous cyclic shift of rows
and columns. In this case
\[
  \sum_{\lambda \vdash n} b^\lambda_{(n),(n)}
  = \bigl|\langle\sigma_{(n)}\rangle \backslash S_n / \langle\sigma_{(n)}\rangle\bigr|
  = \frac{1}{n^2}\sum_{d \mid n} \varphi(d)^2\,\Bigl(\frac{n}{d}\Bigr)!\,d^{\,n/d},
\]
which is exactly sequence
\href{https://oeis.org/A002619}{A002619} in the OEIS~\cite{OEIS-A002619},
$1, 1, 2, 3, 8, 24, 108, 640, 4492, 36336, \ldots$ for
$n = 1, 2, 3, \ldots$.
\end{cor}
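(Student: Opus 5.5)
The plan is to treat the two assertions separately. The first is a bookkeeping identity and the second is a direct specialization of Theorem~\ref{thm:mobius} followed by one interchange of summations.

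For the total count, I would use the partition of the set of double cosets given by Lemma~\ref{lem:orbit_invariant} and the discussion following Lemma~\ref{lem:unique_subgroup}. Every double coset $C$ has a well-defined stabilizer order $d_C \mid D$. Its stabilizer type is then forced to be $\lambda_{d_C}(\alpha)=\lambda_{d_C}(\beta)$. Since $d$ is recovered from $\lambda$ as $\lcm(\lambda)$, the map $C \mapsto$ (stabilizer type) sends each double coset to exactly one $\lambda$. By definition $b^\lambda_{\alpha,\beta}$ is the size of the fibre over $\lambda$, and $b^\lambda_{\alpha,\beta}=0$ for $\lambda$ outside the image. Summing over $\lambda \vdash n$ therefore counts every double coset exactly once, which gives $\sum_\lambda b^\lambda_{\alpha,\beta} = \sum_{d\mid D} N_d = |\DblCoset{\sigma_\alpha}{\sigma_\beta}|$.

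For $\alpha=\beta=(n)$ we have $A=B=D=n$. By Lemma~\ref{lem:unique_subgroup} with $k=1$ and $a_1=n$, we get $\gcd(n,n/d)=n/d$, so $\lambda_d((n)) = (d^{\,n/d})$. The equality $\lambda_d(\alpha)=\lambda_d(\beta)$ is then automatic for all $d\mid n$. Proposition~\ref{prop:F_formula} gives $F(d)=\varphi(d)\,z_{(d^{n/d})} = \varphi(d)\,d^{\,n/d}\,(n/d)!$. I then sum the formula of Theorem~\ref{thm:mobius} over $d\mid n$ and swap the order of summation:
\[
\sum_{d\mid n} N_d = \frac{1}{n^2}\sum_{d'\mid n} F(d') \sum_{d\mid d'} d\,\mu\!\left(\frac{d'}{d}\right) = \frac{1}{n^2}\sum_{d'\mid n}\varphi(d')\,F(d'),
\]
where the inner sum is the classical identity $\varphi = \mu * \mathrm{id}$. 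Substituting $F(d')$ yields $\frac{1}{n^2}\sum_{d\mid n}\varphi(d)^2 (n/d)!\, d^{\,n/d}$.

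As a sanity check, the same expression comes directly from Burnside's lemma applied to the action of $\mathbb{Z}_n\times\mathbb{Z}_n$ on $S_n$. The interpretation as permutation matrices modulo simultaneous cyclic shifts of rows and columns is just the statement that $\pi\mapsto\sigma^x\pi\sigma^{-y}$ cyclically permutes rows and columns of the permutation matrix.

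The identification with A002619 is then a matter of checking initial terms. For example, $n=2$ gives $(1\cdot 2 + 1\cdot 1\cdot 2)/4=1$, and $n=4$ gives $(24+8+16)/16=3$. One then cites the OEIS formula, which has exactly this form.

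The only mildly delicate point is the interchange of summations. One must check that the index set $\{(d,d') : d\mid d'\mid n\}$ is the same before and after the swap. One must also justify extending the sum over $d$ to all divisors; this is harmless here because every $d\mid n$ is valid.
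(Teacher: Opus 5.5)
Your proposal is correct and follows the route the paper indicates; the corollary has no separate proof there, only ``summing over all valid $d \mid D$'' and specializing Theorem~\ref{thm:mobius} to $\alpha=\beta=(n)$, where $\lambda_d((n))=(d^{\,n/d})$ and $F(d)=\varphi(d)\,d^{\,n/d}(n/d)!$. Your interchange of summation using $\sum_{d\mid d'} d\,\mu(d'/d)=\varphi(d')$ (equivalently, Burnside's lemma) spells out the step to the closed form $\frac{1}{n^2}\sum_{d\mid n}\varphi(d)^2 (n/d)!\,d^{\,n/d}$ that the paper leaves implicit, and the resulting values agree with A002619.
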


\subsection{Worked examples}
\label{sub:example}

We first illustrate Theorem~\ref{thm:mobius} on the smallest case
with a non-trivial divisor lattice, namely $\alpha=\beta=(4,2)\vdash 6$.
A second example below treats $\alpha \neq \beta$. All numerical
computations in this section -- including the values of $F(d)$, the
resulting coefficients $b^\lambda_{\alpha,\beta}$, the consistency
checks against $|S_n|$, and the OEIS sequence identified in
Corollary~\ref{cor:total} -- were carried out and independently
verified using SageMath~\cite{SageMath}, as were the analogous
computations in the predecessor paper~\cite{BR2026}.

\begin{ex}[$\alpha=\beta=(4,2)$]
\label{ex:mobius}
Here $A = B = \lcm(4,2) = 4$ and $D=\gcd(A,B)=4$, so the divisors
to consider are $d \in \{1,2,4\}$.

\smallskip
\noindent\textbf{Step 1: forced cycle types.}
By Lemma~\ref{lem:unique_subgroup}, for each $d \mid 4$ we compute
$\lambda_d(\alpha)$ by splitting the $i$-th part $a_i$ into
$\gcd(a_i, A/d)$ parts of size $a_i/\gcd(a_i,A/d)$:
\[
\begin{array}{c|c|c|c}
d & A/d & \text{split of }4 & \text{split of }2 \\ \hline
4 & 1 & 4 & 2 \\
2 & 2 & 2,2 & 1,1 \\
1 & 4 & 1,1,1,1 & 1,1
\end{array}
\]
so $\lambda_4(\alpha) = (4,2)$, $\lambda_2(\alpha) = (2,2,1,1)$, and
$\lambda_1(\alpha) = (1^6)$. Since $\beta = \alpha$, we have
$\lambda_d(\beta) = \lambda_d(\alpha)$ for every $d$, so all three
divisors contribute (no vanishing from a cycle-type mismatch).

\smallskip
\noindent\textbf{Step 2: the values $F(d)$.}
By Proposition~\ref{prop:F_formula}, $F(d) = \varphi(d)\,
z_{\lambda_d(\alpha)}$, using $z_\lambda = \prod_i i^{m_i}m_i!$:
\[
\begin{array}{c|c|c|c|c}
d & \lambda_d(\alpha) & \varphi(d) & z_{\lambda_d(\alpha)} & F(d) \\ \hline
4 & (4,2) & 2 & 4\cdot 2 = 8 & 16 \\
2 & (2,2,1,1) & 1 & (2^2\cdot 2!)(1^2\cdot 2!) = 16 & 16 \\
1 & (1^6) & 1 & 6! = 720 & 720
\end{array}
\]

\smallskip
\noindent\textbf{Step 3: M\"obius inversion.}
With $AB = 16$, Theorem~\ref{thm:mobius} gives
$b^{\lambda_d(\alpha)}_{\alpha,\beta} = \tfrac{d}{16}
\sum_{d \mid d' \mid 4} \mu(d'/d)\,F(d')$:
\[
\begin{aligned}
b^{(4,2)}_{\alpha,\beta}
  &= \tfrac{4}{16}\bigl[\mu(1)F(4)\bigr]
   = \tfrac{4}{16}(16) = 4, \\[2pt]
b^{(2,2,1,1)}_{\alpha,\beta}
  &= \tfrac{2}{16}\bigl[\mu(1)F(2) + \mu(2)F(4)\bigr]
   = \tfrac{2}{16}\bigl(16 - 16\bigr) = 0, \\[2pt]
b^{(1^6)}_{\alpha,\beta}
  &= \tfrac{1}{16}\bigl[\mu(1)F(1) + \mu(2)F(2) + \mu(4)F(4)\bigr]
   = \tfrac{1}{16}\bigl(720 - 16 + 0\bigr) = 44.
\end{aligned}
\]
All other $b^\lambda_{\alpha,\beta}$ vanish, since only $d=1,2,4$
can occur as a stabilizer order. Thus
\[
  \Cb_{(4,2)} \times \Cb_{(4,2)}
  = 4\,\Cb_{(4,2)} \;+\; 44\,\Cb_{(1,1,1,1,1,1)},
\]
the class $\Cb_{(2,2,1,1)}$ occurring with coefficient $0$.

\smallskip
\noindent\textbf{Consistency check.}
By Lemma~\ref{lem:orbit_invariant}, a double coset with stabilizer
order $d$ has size $AB/d = 16/d$, so the double cosets must
partition $S_6$:
\[
  b^{(4,2)}_{\alpha,\beta}\cdot\tfrac{16}{4}
  + b^{(2,2,1,1)}_{\alpha,\beta}\cdot\tfrac{16}{2}
  + b^{(1^6)}_{\alpha,\beta}\cdot\tfrac{16}{1}
  = 4\cdot 4 + 0\cdot 8 + 44\cdot 16
  = 16 + 0 + 704 = 720 = 6!,
\]
confirming the computation and, via Corollary~\ref{cor:total}, that
$\bigl|\DblCoset{\sigma_{(4,2)}}{\sigma_{(4,2)}}\bigr| = 4 + 0 + 44
= 48$.
\end{ex}

Our second example takes $\alpha \neq \beta$, with $A \neq B$ and
$D$ strictly smaller than both, so that the divisor lattice
genuinely reflects an interaction between two different shapes.

\begin{ex}[$\alpha=(4,2)$, $\beta=(2,2,1,1)$]
\label{ex:mobius2}
Here $A = \lcm(4,2) = 4$ while $B = \lcm(2,2,1,1) = 2$, so
$D = \gcd(A,B) = 2$: only the divisors $d \in \{1,2\}$ of $D$ can
possibly occur, even though $A$ itself has the further divisor
$4$. This already illustrates the vanishing clause of
Theorem~\ref{thm:mobius}: no $\lambda$ with $\lcm(\lambda)=4$ can
appear in $\Cb_{(4,2)} \times \Cb_{(2,2,1,1)}$, because $4 \nmid D$.

\smallskip
\noindent\textbf{Step 1: forced cycle types.}
For $\alpha=(4,2)$ (so $A=4$) the same computation as in
Example~\ref{ex:mobius} gives $\lambda_2(\alpha) = (2,2,1,1)$ and
$\lambda_1(\alpha) = (1^6)$. For $\beta = (2,2,1,1)$ (so $B=2$) we
split each part $b_i$ into $\gcd(b_i,B/d)$ pieces of size
$b_i/\gcd(b_i,B/d)$:
\[
\begin{array}{c|c|c|c|c|c}
d & B/d & \text{split of }2 & \text{split of }2 & \text{split of }1 & \text{split of }1\\ \hline
2 & 1 & 2 & 2 & 1 & 1 \\
1 & 2 & 1,1 & 1,1 & 1 & 1
\end{array}
\]
so $\lambda_2(\beta) = (2,2,1,1)$ and $\lambda_1(\beta) = (1^6)$.
In both cases $\lambda_d(\alpha) = \lambda_d(\beta)$, so — unlike a
generic pair of shapes — neither admissible divisor is killed by a
cycle-type mismatch here; we address the mismatched case in
Remark~\ref{rem:mismatch} below.

\smallskip
\noindent\textbf{Step 2: the values $F(d)$.}
As before $F(d) = \varphi(d)\,z_{\lambda_d(\alpha)}$:
\[
\begin{array}{c|c|c|c|c}
d & \lambda_d(\alpha)=\lambda_d(\beta) & \varphi(d) & z_{\lambda_d(\alpha)} & F(d) \\ \hline
2 & (2,2,1,1) & 1 & (2^2\cdot 2!)(1^2\cdot 2!) = 16 & 16 \\
1 & (1^6) & 1 & 6! = 720 & 720
\end{array}
\]

\smallskip
\noindent\textbf{Step 3: M\"obius inversion.}
With $AB = 4\cdot 2 = 8$, Theorem~\ref{thm:mobius} gives
\[
\begin{aligned}
b^{(2,2,1,1)}_{\alpha,\beta}
  &= \tfrac{2}{8}\bigl[\mu(1)F(2)\bigr]
   = \tfrac{2}{8}(16) = 4, \\[2pt]
b^{(1^6)}_{\alpha,\beta}
  &= \tfrac{1}{8}\bigl[\mu(1)F(1) + \mu(2)F(2)\bigr]
   = \tfrac{1}{8}\bigl(720 - 16\bigr) = 88.
\end{aligned}
\]
Every other $b^\lambda_{\alpha,\beta}$ vanishes, so
\[
  \Cb_{(4,2)} \times \Cb_{(2,2,1,1)}
  = 4\,\Cb_{(2,2,1,1)} \;+\; 88\,\Cb_{(1,1,1,1,1,1)}.
\]

\smallskip
\noindent\textbf{Consistency check.}
As in Example~\ref{ex:mobius}, $b^{(2,2,1,1)}_{\alpha,\beta}\cdot
\tfrac{8}{2} + b^{(1^6)}_{\alpha,\beta}\cdot\tfrac{8}{1}
= 4\cdot 4 + 88\cdot 8 = 16 + 704 = 720 = 6!$, and Corollary~\ref{cor:total}
gives $\bigl|\DblCoset{\sigma_{(4,2)}}{\sigma_{(2,2,1,1)}}\bigr|
= 4+88 = 92$.
\end{ex}

\begin{rem}[When cycle types genuinely clash]
\label{rem:mismatch}
For a pair of shapes where $\lambda_d(\alpha) \neq \lambda_d(\beta)$
for some $d \mid D$, Proposition~\ref{prop:F_formula} forces
$F(d)=0$ outright, and the corresponding $\lambda$ does not occur
in $\Cb_\alpha \times \Cb_\beta$ at all — it is simply absent from
the sum, rather than appearing with a zero coefficient as
$\Cb_{(2,2,1,1)}$ did in Example~\ref{ex:mobius}. For instance,
with $\alpha=(4,2)$ and $\beta=(3,3)$ one has $A=4$, $B=3$,
$D=\gcd(4,3)=1$, so only $d=1$ is available; both
$\lambda_1(\alpha)=\lambda_1(\beta)=(1^6)$ automatically (any
$\sigma^{A}=\sigma^{B}=\mathrm{id}$), and one finds
$b^{(1^6)}_{\alpha,\beta} = \tfrac{1}{12}\cdot 6! = 60$, with
\emph{every} other $b^\lambda_{\alpha,\beta}=0$: the shapes $(4,2)$
and $(3,3)$ share no non-trivial common divisor of their orders, so
$\Cb_{(4,2)} \times \Cb_{(3,3)} = 60\,\Cb_{(1,1,1,1,1,1)}$ is
forced to be a multiple of the regular representation alone.
\end{rem}

\section{Open problems}
\label{sec:open}

\begin{prob}[Extension to the species $\Kb_\alpha$]
\label{prob:K}
For $\Kb_\alpha \times \Kb_\beta$, the relevant group is
$G_\alpha \times G_\beta$, where $G_\alpha = \langle\sigma_{a_1}
\rangle \times \cdots \times \langle\sigma_{a_k}\rangle$ is a
product of cyclic groups but is \emph{not itself cyclic} once two
parts of $\alpha$ coincide. The group $G_\alpha \times G_\beta$
is still abelian, so Lemma~\ref{lem:orbit_invariant} continues to
hold and stabilizers remain orbit-invariant. However,
Lemma~\ref{lem:unique_subgroup} fails: $G_\alpha$ may have several
subgroups of the same order, so the M\"obius inversion of
Theorem~\ref{thm:mobius} must be carried out over the full
subgroup lattice of $G_\alpha$ (a product of divisor lattices)
rather than over the divisors of a single integer. Determine
whether this subgroup-lattice M\"obius inversion still yields a
closed-form formula for the coefficients $j^\lambda_{\alpha,\beta}$ in
$\Kb_\alpha \times \Kb_\beta = \sum_\lambda j^\lambda_{\alpha,\beta}\Kb_\lambda$.
\end{prob}

\begin{prob}[Non-commutative refinement]
\label{prob:ribbon}
The paper~\cite{BR2026} also expands $\Cb_\lambda$ in the ribbon
basis $\{r_\nu\}$ of non-commutative symmetric functions. Determine
whether the periodic patterns of Section~\ref{sec:realization}
carry a natural descent statistic explaining the resulting
expansion coefficients $\sigma_{\lambda,\nu}$ combinatorially, in
the same spirit as Theorem~\ref{thm:mobius} explains
$b^\lambda_{\alpha,\beta}$ algebraically.
\end{prob}

\section{Conclusion}
\label{sec:conclusion}
We realized $\Cb_\alpha$ geometrically as an infinite multi-row
periodic pattern shifted by a single integer
(Proposition~\ref{prop:realization}), and then obtained a
closed-form formula for the coefficient $b^\lambda_{\alpha,\beta}$
via M\"obius inversion over the divisors of
$\gcd(o(\sigma_\alpha),o(\sigma_\beta))$ (Theorem~\ref{thm:mobius}).
This is structurally simpler than the Garsia--Remmel shuffle rule for
$h_\lambda \star h_\mu$, since the cyclic-subgroup setting reduces to
counting orbits of an \emph{abelian} group action with a totally
ordered subgroup lattice.
Section~\ref{sec:open} collects natural next steps, including
extending this technique to the species $\Kb_\alpha$
(Open Problem~\ref{prob:K}).

\section*{Acknowledgements}
The authors would like to thank François Bergeron, Martin Rubey and Matthieu Josuat-Vergès for their valuable feedback and helpful comments. All numerical computations and consistency checks in this work were carried out and independently verified using SageMath~\cite{SageMath}. Additionally, the authors acknowledge the use of Anthropic's Claude as an AI assistant for exploration and proof verification and to polish the English grammar and stylistic flow of the manuscript. The authors rigorously reviewed all AI-generated and computational output and assume full responsibility for the final mathematical results and text.


\end{document}